\newcommand{\R}{\mathbb{R}}
\newcommand{\N}{\mathbb{N}}
  \gdef\listctr{list\romannumeral\the\@listdepth}\expandafter
\newcommand{\ve}[1]{\mathbf{#1}}
\def\x{\ve{x}}
\def\y{\ve{y}}
\def\z{\ve{z}}
\def\e{\ve{e}}
\def\a{\ve{a}}
\def\A{\ve{A}}
\def\Cc{\mathcal{C}}
\def\R{\mathbb R}
\def\prox{{\mbox{prox}}}
\def\prox{\mathrm{prox}} 
\newcommand{\argmin}{\operatornamewithlimits{argmin}}
\newcommand{\BR}{B-rex}
\newtheorem{theorem}{Theorem}
\newtheorem{proposition}[theorem]{Proposition}
\newtheorem{remark}{Remark}
\newtheorem{assump}{Assumption}
\def\BibTeX{{\rm B\kern-.05em{\sc i\kern-.025em b}\kern-.08em
    T\kern-.1667em\lower.7ex\hbox{E}\kern-.125emX}}
\begin{document}

\title{Box-constrained $\ell_0$ Bregman-relaxations\\
\thanks{ME and ES  acknowledge the financial support of the ANR EROSION (ANR-22-CE48-0004). LC acknowledges the financial support of the European Research Council (grant MALIN, 101117133).}
}

\author{\IEEEauthorblockN{Mhamed Essafri}
\IEEEauthorblockA{\textit{IRIT, Université de Toulouse, INP}\\
Toulouse, France \\
mhamed.essafri@irit.fr}
\and
\IEEEauthorblockN{
Luca Calatroni}
\IEEEauthorblockA{\textit{MaLGa, DIBRIS,} 
\textit{Università di Genova},\\
 \textit{MMS, Istituto Italiano di Tecnologia}, \\
Genoa, Italy,\\
luca.calatroni@unige.it}

\and
\IEEEauthorblockN{Emmanuel Soubies}
\IEEEauthorblockA{\textit{IRIT, Université de Toulouse, CNRS}\\
Toulouse, France  \\
emmanuel.soubies@cnrs.fr}
}

\maketitle

\begin{abstract}
Regularization using the $\ell_0$ pseudo-norm is a common approach to promote sparsity, with widespread applications in machine learning and signal processing.
However, solving such problems is known to be NP-hard. Recently, the $\ell_0$ Bregman relaxation (\BR{}) has been introduced as a continuous, non-convex approximation of the $\ell_0$ pseudo-norm. Replacing the $\ell_0$ term with \BR{} leads to exact continuous relaxations that preserve the global optimum while simplifying the optimization landscape, making non-convex problems more tractable for algorithmic approaches.
In this paper, we focus on box-constrained exact continuous Bregman relaxations of $\ell_0$-regularized criteria with general data terms, including least-squares, logistic regression, and Kullback-Leibler fidelities. Experimental results on synthetic data, compared with  Branch-and-Bound methods, demonstrate the effectiveness of the proposed relaxations.
\end{abstract}

\begin{IEEEkeywords}
$\ell_0$-relaxation, non-convex optimization, continuous exact relaxations, box constraint. 
\end{IEEEkeywords}

\section{Introduction}

Given a possibly undetermined ($M \leq  N$) forward matrix $\A \in \R^{M \times N}$ and  vector of observations $\y \in \R^M$, we consider problems of the form
\begin{equation}\label{eq:problem_setting}
\hat{\x} \in \argmin_{\x \in [l,u]^N} \left\{ \mkern-3mu J_0(\x) := F_\y(\A \x) + \lambda_0 \|\x\|_0 + \frac{\lambda_2}{2} \|\x\|^2_2\right\}
\end{equation}
where $ l \in \R_{\leq 0} \cup \{-\infty \}$ and $u \in \R_{\geq 0} \cup \{+\infty\}$ {define a box constraint}, the terms  $\|\cdot\|_2$ and $\|\cdot\|_0$ denotes respectively  the squared  $\ell_2$ norm and the $\ell_0$ pseudo-norm that counts the number of non-zero elements in a vector of $\R^N$. The hyperparameters  $\lambda_0 > 0$ and $\lambda_2 \geq 0 $ control respectively the sparsity and the $\ell_2$ ridge regularization strengths. 
 Finally,  $F_\y \; :  \; \R^M \mapsto \R_{\geq 0}$ is a data-fidelity function that measures the discrepancy between the model $\A\x$ and the data $\y$, and satisfies the following assumption.
\begin{assump}
The data fidelity function is coordinate-wise separable. i.e, $F_\y(\z) = \sum_{m=1}^M f(z_m;y_m)$, where for each $y \in \R$, $f(\cdot;y)$ is convex, proper, twice differentiable on $(l,u)$ and bounded from below.
\end{assump}

 Beyond the popular least-squares function,  exemplar instances of such data-fidelity terms include the Kullback-Leibler divergence~\cite{cameron2013regression} and logistic loss~\cite{buhlmann2011statistics}, which arise in signal/image processing and machine learning applications.

\begin{remark} To simplify the presentation, we consider in~\eqref{eq:problem_setting} the case where $l:=l_1 = l_2=\dots=l_N$ and $u:=u_1 = u_2=\cdots=u_N$, i.e.~the same box constraint is applied to each component. Note, however, that our results can be easily extended to the general case.
\end{remark}

\subsection{Related Works}

While the $\ell_0$ pseudo-norm is the most natural choice for enforcing sparsity, its discontinuity, non-convexity, and non-smoothness make the associated problem~\eqref{eq:problem_setting} NP-hard~\cite{Nguyen2019}. {Yet there exists a vast literature related to this problem}. One approach is to address the original problem directly, such as with the Iterative Hard Thresholding (IHT), which can be extended to this setting and guarantees convergence to a critical point~\cite{Attouch2013}. In~\cite{Beck-SPS}, the authors studied the proximal mapping of the $\ell_0$ function over symmetric sets that satisfy a submodularity-like property (SOM) and developed algorithms that converge to critical points. For moderately sized problems, branch-and-bound (BnB) methods offer exact solutions at a reasonable computational cost~\cite{Diego,guyard:24}. 

In this work, we focus on exact relaxation approaches, which replace the $\ell_0$ pseudo-norm with a continuous (non-convex) penalty function while preserving global minimizers. Moreover, such relaxed formulations remove some local minimizers of the initial problem, making the optimization landscape more favorable to optimization algorithms. In this context, the authors in~\cite{Weigeneral} proposed an exact relaxation using a capped-$\ell_1$ penalty for cases where $F_\y(\A\cdot) + \frac{\lambda_2}{2} \|\cdot\|_2^2$ is convex, Lipschitz continuous, and non-smooth, applicable to both unconstrained and box-constrained cases. Building on this, they developed a smoothing proximal gradient (SPG) algorithm to find a stationary point of the relaxed problem, which corresponds to a local minimizer of the original problem. However, for the previously mentioned data-terms, $F_\y(\A\cdot) + \frac{\lambda_2}{2} \|\cdot\|_2^2$ is not necessarily Lipschitz continuous. For this setting, quadratic envelopes of the $\ell_0$ pseudo-norm~\cite{carlsson2019convex}, including the continuous exact $\ell_0$ (CEL0) penalty~\cite{soubies2015continuous}, enable exact relaxation for least-squares data terms. Recently, these ideas have been generalized with the introduction of the $\ell_0$ Bregman relaxation (\BR{}), providing exact relaxations for general (non-quadratic) data terms~\cite{Exact2024,Essafri2024MLSP}. Additionally, the work in~\cite{Lazzaretti2021} proposed a weighted-CEL0 relaxation for weighted-$\ell_2$ data terms, offering an approximation of the KL divergence. While these approaches are limited to the unconstrained case, in this paper, we extend the works of~\cite{Exact2024,Essafri2024MLSP} to box-constrained problems.

\subsection{Contributions and Outline}
In this work we extend the framework of exact continuous $\ell_0$ Bregman-relaxations, originally proposed  for unconstrained and non-negatively constrained problems~\cite{Exact2024,Essafri2024MLSP}, to box-constrained problems of the form \eqref{eq:problem_setting}. More precisely, we provide in Proposition \ref{th:exact_relax} an exact relaxation result. In view of designing effective numerical schemes solving the relaxed problem, we consider in Section \ref{sec:minminiz_JPsi} some proximal-based schemes showing, in particular, how the proximal operator of the proposed exact penalty can be efficiently computed. In addition, we will show that the relaxed problem can be minimized via iteratively reweighted $\ell_1$. In Section~\ref{sec:experiments}, we compare the minimization of the relaxed criteria with both BnB and IHT procedures showing good agreement with certified global procedures in the case of small-size problems and the applicability of the approach to larger-scale problems.


\subsection{Notations}

Let $[N] = \{1, 2, \dots, N\}$ denote the set of indices up to $N$. The symbol $\mathbf{0} \in \R^N$ represents the vector of all zeros, and for each $n \in [N]$, $\e_n \in \R^N$ denotes the unit vector of the standard basis in $\R^N$. The set $\R_{\geq 0} = \{x \in \R \mid x \geq 0\}$ represents the non-negative real line. For $n \in [N]$, $\x^{(n)} = (x_1, \dots, x_{n-1}, 0, x_{n+1}, \dots, x_N)$ denotes the vector $\x$ with the $n$-th component replaced by 0.

\section{Box-Constrained \BR{}}

In this section, we extend the $\ell_0$ Bregman Relaxation (\BR {}) introduced in\cite{Exact2024} to the box-constrained Problem~\eqref{eq:problem_setting}. Consider a family $\Psi = \left\lbrace \psi_n \right\rbrace_{n \in [N]}$ of strictly convex, proper, and twice-differentiable functions such that $\mathrm{dom}(\psi_n) \in \{\R,\R_{\geq 0}\}$. We define the box-constrained \BR{} as $B_\Psi^{l,u} : [l,u]^N \to \R_{\geq 0}$ such that for all  $\x \in [l,u]^N$:
\begin{equation}\label{eq:continuous_form} 
\begin{array}{rl}
B^{l,u}_\Psi(\x) = & \displaystyle  \sup_{\alpha \in \R} \sup_{\z \in \mathrm{dom}(\Psi)} \alpha - D_\Psi(\x, \z) \\
& \displaystyle \text{ s.t. }  \alpha - D_\Psi(\cdot, \z) \leq \lambda_0\|\cdot\|_0 + \mathcal{I}_{[l,u]^N}
\end{array}
\end{equation}
where $\mathcal{I}_{[l,u]^N}(\x) = \{ 0 \text{ if } \x \in [l,u]^N ; + \infty \text{ otherwise}\}$ and $D_\Psi$ denotes the Bregman divergence associated to $\Psi$. It is defined, for all $\x,\z \in \mathrm{dom}(\Psi)$ by
\begin{equation}
    D_\Psi(\mathbf{x},\mathbf{z}) = \sum_{n=1}^N d_{\psi_n}(x_n,z_n) \\
\end{equation}
with $d_{\psi_n}(x,z) = \psi_n(x) - \psi_n(z) - \psi_n'(z) (x-z)\; \forall x, z \in \mathrm{dom}(\psi_n)$. Standard choices of the functions $\psi_n$ are the $p$-power functions, the Shannon entropy and the Kullback-Leibler divergence, see~\cite{Exact2024,Essafri2024MLSP}. Note, that as opposed to the unconstrained \BR{} proposed therein, the extension~\eqref{eq:continuous_form} depends on the box-constraint through the term $\mathcal{I}_{[l,u]^N}$.

\begin{figure*}[!ht]
    \centering
    \begin{subfigure}[b]{0.43\linewidth}
        \centering
        \includegraphics[width=\linewidth]{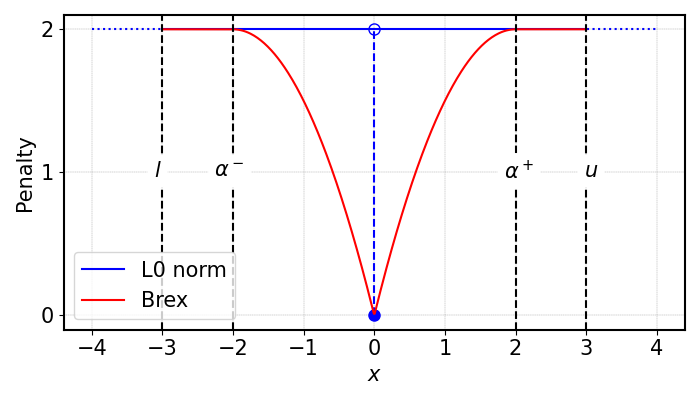}
    \end{subfigure}
    \hspace{0.3cm}
    \begin{subfigure}[b]{0.43\linewidth}
        \centering
        \includegraphics[width=\linewidth]{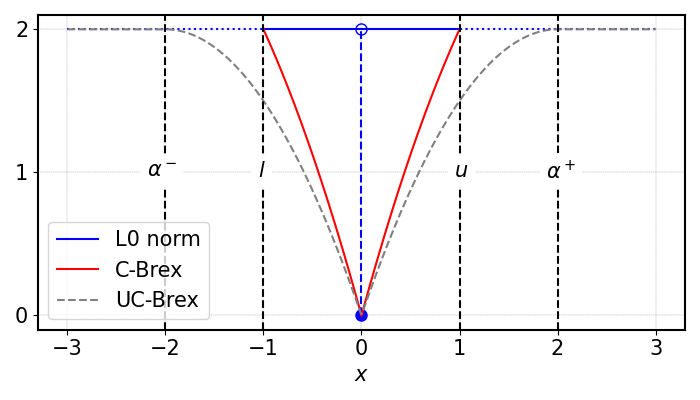}
    \end{subfigure}
    \caption{Illustration of box-constrained \BR{}  when (left) $[l,u] \supseteq [\alpha^-,\alpha^+]$  and (right) $[l,u] \subseteq [\alpha^-,\alpha^+]$. \label{fig:illustration_BR}}
\end{figure*}

\begin{proposition}[Closed form expression of $B_\Psi^{l,u}$]\label{prop:main_prop}
    For all $n \in [N]$, let $\alpha^-_n \leq 0$ and $\alpha^+_n \geq 0$ be such that $[\alpha_n^-, \alpha_n^+]$ defines the set of sublevels $\lambda_0$ of $d_{\psi_n}(0, \cdot)$. Then, for every $\x \in [l,u]^N$, we have
    \(
    B_\Psi^{l,u}(\x) = \sum_{n=1}^N \beta^{l,u}_{\psi_n}(x_n),
    \)
    where, for $x \in [l,u]$, the functions $\beta_{\psi_n}^{l,u}$ are defined by
    \begin{equation*}\label{eq:beta-psi}
    \mkern-6mu \beta_{\psi_n}^{l,u}(x) \mkern-4mu = \mkern-3mu
    \begin{cases}
        \psi_n(0) - \psi_n(x) + \kappa_n^- x, \mkern-12mu & \text{if } x \in (\eta_n^-, 0], \\
        \psi_n(0) - \psi_n(x) + \kappa_n^+ x, \mkern-12mu & \text{if } x \in [0, \eta_n^+), \\
        \lambda_0, \mkern-12mu & \text{if } x \in [l,u] \backslash (\eta_n^-, \eta^+_n).
    \end{cases}
    \end{equation*}
    where
    \(
    \eta_n^- = \max\{\alpha_n^-, l\}, \quad \eta^+_n = \min\{\alpha_n^+, u\}
    \). Moreover, for $l\neq 0$ and $u\neq0$, the slopes $\kappa_n^-$ and $\kappa_n^+$ are given by
    \[
    \kappa_n^- = 
    \begin{cases}
        \psi_n'(\alpha_n^-), & \text{if } \alpha^-_n \geq l, \\
        l^{-1} \left( \lambda_0 + \psi_n(l) - \psi_n(0) \right), & \text{if } \alpha^-_n < l,
    \end{cases}
    \]
    \[
    \kappa_n^+ = 
    \begin{cases}
        \psi_n'(\alpha_n^+), & \text{if } \alpha^+_n \le u, \\
        u^{-1} \left( \lambda_0 + \psi_n(u) - \psi_n(0) \right), & \text{if } \alpha^+_n > u.
    \end{cases}
    \]
\end{proposition}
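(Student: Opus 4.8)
The plan is to exploit the separable structure of every term in~\eqref{eq:continuous_form} in order to reduce the claim to a one-dimensional convex-analytic computation, and then to identify the scalar building block $\beta_{\psi_n}^{l,u}$ with a Fenchel biconjugate whose explicit form is obtained by a convex-hull argument. \textit{Step 1 (separation).} Fix $\z\in\mathrm{dom}(\Psi)$. The constraint in~\eqref{eq:continuous_form} is equivalent to $\alpha\le \lambda_0\|\x\|_0+\mathcal I_{[l,u]^N}(\x)+D_\Psi(\x,\z)$ for all $\x$, so the optimal $\alpha$ for that $\z$ is $\bar\alpha(\z):=\inf_\x\{\lambda_0\|\x\|_0+\mathcal I_{[l,u]^N}(\x)+D_\Psi(\x,\z)\}$, which is nonnegative and finite (evaluate at $\x=\mathbf 0$). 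Since $D_\Psi$, $\|\cdot\|_0$ and $\mathcal I_{[l,u]^N}$ all split over coordinates and $\mathrm{dom}(\Psi)=\prod_n\mathrm{dom}(\psi_n)$, one gets $\bar\alpha(\z)=\sum_n \bar\alpha_n(z_n)$ with $\bar\alpha_n(z):=\inf_{x'\in[l,u]}\{\lambda_0\|x'\|_0+d_{\psi_n}(x',z)\}$, hence
\[
B_\Psi^{l,u}(\x)=\sup_{\z}\big(\bar\alpha(\z)-D_\Psi(\x,\z)\big)=\sum_{n=1}^{N}\beta_{\psi_n}^{l,u}(x_n),\qquad
\beta_\psi^{l,u}(x):=\sup_{z\in\mathrm{dom}(\psi)}\big(\bar\alpha_\psi(z)-d_\psi(x,z)\big),
\]
where henceforth $\psi:=\psi_n$. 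It remains to compute the scalar function $\beta_\psi^{l,u}$.

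\textit{Step 2 (biconjugate reformulation).} Let $g:=\lambda_0\|\cdot\|_0+\mathcal I_{[l,u]}:\R\to[0,+\infty]$. Expanding $d_\psi(x',z)=\psi(x')-\psi(z)-\psi'(z)(x'-z)$ inside the infimum defining $\bar\alpha_\psi(z)$ and subtracting $d_\psi(x,z)$, a short computation gives
\[
\bar\alpha_\psi(z)-d_\psi(x,z)=\psi'(z)\,x-\psi(x)-(g+\psi)^{*}\!\big(\psi'(z)\big),
\]
with $(\cdot)^{*}$ the Legendre--Fenchel conjugate. Taking the supremum over $z\in\mathrm{dom}(\psi)$, i.e.\ over slopes $s=\psi'(z)$ ranging over the image of $\psi'$, yields $\beta_\psi^{l,u}(x)=(g+\psi)^{**}(x)-\psi(x)$ for $x\in[l,u]$, provided the supremum defining $(g+\psi)^{**}(x)$ is attained at a slope in the range of $\psi'$; this holds under mild assumptions on $\psi$, in particular for the standard choices of $\psi_n$ (for which $\psi'$ is onto $\R$). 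On $[l,u]$ the function $g+\psi$ equals $\psi(0)$ at $x=0$ and $\lambda_0+\psi(x)$ for $x\neq 0$, so we are left with an explicit lower-convex-envelope computation.

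\textit{Step 3 (convex hull, and the main obstacle).} By strict convexity of $\psi$, the lower convex envelope of $g+\psi$ on $[l,u]$ decouples over $[0,u]$ and $[l,0]$; we describe $[0,u]$. The envelope is obtained by supporting the strictly convex arc $x\mapsto\lambda_0+\psi(x)$ from below by the affine function issued from $(0,\psi(0))$; its tangency abscissa $z^+$ solves $\psi(0)+\psi'(z^+)z^+=\lambda_0+\psi(z^+)$, i.e.\ $d_\psi(0,z^+)=\lambda_0$, whence $z^+=\alpha^+$. If $\alpha^+\le u$, this tangency is admissible and the envelope equals $\psi(0)+\psi'(\alpha^+)x$ on $[0,\alpha^+]$ and $\lambda_0+\psi(x)$ on $[\alpha^+,u]$; if $\alpha^+>u$ (which, in particular, never occurs when $u=+\infty$), the supporting line is forced through the endpoint $(u,\lambda_0+\psi(u))$, hence has slope $\kappa^+=u^{-1}(\lambda_0+\psi(u)-\psi(0))$. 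Subtracting $\psi$ then gives exactly the stated expression on $[0,\eta^+)$ together with $\beta_\psi^{l,u}\equiv\lambda_0$ on $[\eta^+,u]$, where $\eta^+=\min\{\alpha^+,u\}$; the interval $[l,0]$ is symmetric, producing $\eta^-=\max\{\alpha^-,l\}$ and the stated $\kappa^-$. The step requiring the most care is precisely this convex-hull argument: one must handle the case split (tangency point inside versus outside the box --- the very origin of the two expressions for $\kappa^\pm$) and, in the ``outside'' case, verify that the endpoint chord underestimates $\lambda_0+\psi$ throughout $[0,u]$. This verification is where the characterization of $[\alpha^-,\alpha^+]$ as the $\lambda_0$-sublevel set of $d_\psi(0,\cdot)$ is used: the function $x\mapsto\psi(0)+\kappa^+x-\psi(x)$ is concave and is nondecreasing at $u$ iff $\psi'(u)\le\kappa^+$, which is equivalent to $d_\psi(0,u)\le\lambda_0$, true since $u<\alpha^+$ (and symmetrically $\psi'(l)\ge\kappa^-$ when $\alpha^-<l$). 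A minor secondary point is the range-of-$\psi'$ issue flagged in Step 2.
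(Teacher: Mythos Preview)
Your approach is correct and takes a genuinely different, more conceptual route than the paper. After the common reduction to one dimension, the paper computes $\bar\alpha_\psi(z)=\inf_{x'\in[l,u]}\{\lambda_0|x'|_0+d_\psi(x',z)\}$ directly by splitting into $z\in[l,u]$ versus $z\notin[l,u]$, introduces auxiliary thresholds $\xi^\pm=(\psi')^{-1}(\kappa^\pm)$, establishes the ordering $\xi^-\le\alpha^-\le 0\le\alpha^+\le\xi^+$, and then evaluates $\sup_z\{\bar\alpha_\psi(z)-d_\psi(x,z)\}$ through a second layer of case analysis on whether $u\le\xi^+$ or $u\ge\xi^+$ (and symmetrically on the negative side). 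Your argument instead recognises the identity $\beta_\psi^{l,u}=(g+\psi)^{**}-\psi$ with $g=\lambda_0|\cdot|_0+\mathcal I_{[l,u]}$, which collapses everything to one geometric question: the lower convex envelope of $\lambda_0+\psi$ on $[l,u]$ together with the isolated point $(0,\psi(0))$. The tangency-versus-endpoint dichotomy you describe is exactly the split $\alpha^+\le u$ versus $\alpha^+>u$ and produces the two formulae for $\kappa^+$ immediately, without ever introducing $\xi^\pm$. Your route is shorter and structurally clearer; the paper's route is more elementary in that it avoids Fenchel conjugacy altogether.

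Two points in your write-up would benefit from one extra sentence. First, the ``decoupling'' of the envelope at $0$ is not automatic: it holds because the affine function $x\mapsto\psi(0)+\psi'(0)x$ minorises $g+\psi$ on all of $[l,u]$, which forces $(g+\psi)^{**}(0)=\psi(0)$ and, together with your monotonicity observation, gives $\kappa^-<\psi'(0)<\kappa^+$, so the two one-sided envelopes glue convexly. Second, the range-of-$\psi'$ caveat you flag is also implicitly present in the paper's proof, which inverts $\psi'$ to define $\xi^\pm$; it is a shared standing assumption on $\psi$ rather than a weakness specific to your argument.
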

\begin{proof}
The proof can be found in Appendix~\ref{proof:main_prop}.   
\end{proof}

 In Figure~\ref{fig:illustration_BR}, we present one-dimensional examples of box-constrained \BR{}. We distinguish two cases. In the first one (left graph), the box-constraint $[l,u]$ contains the interval $[\alpha^-,\alpha^+]$ where the unconstrained \BR{}~\cite{Exact2024} is non-constant. In this case, both box-constrained and unconstrained \BR{} coincide. In the opposite situation where $[l,u] \subset [\alpha^-,\alpha^+]$ (right graph), the box-constrained \BR{} deviates from its unconstrained (showed in gray) counterpart.

Equipped with the box-constrained \BR{}~\eqref{eq:continuous_form}, we consider the following continuous relaxation of $J_0$ for $\x \in [l,u]^N$,
\begin{equation}  \label{eq:relaxed}
    J^{l,u}_\Psi(\x) = F_\y(\A \x) + B_\Psi^{l,u}(\x) + \frac{\lambda_2}{2} \|\x\|^2_2.
\end{equation}

\section{Exact Relaxations Properties}

In Theorem~\ref{th:exact_relax}, we provide conditions on $\Psi$ so that $J_\Psi$ is an \textit{exact relaxation} of $J_0$,  meaning it preserves  global minimizers while eliminating certain local ones.

\begin{theorem}[Exact relaxation property]\label{th:exact_relax}
 Let $\Psi$ be such that, $\forall n \in [N]$ and $\forall t \in (\eta^-_n,0) \cup (0,\eta^+_n) $
 \begin{equation}\label{eq:cc}
     \frac{\partial^2}{\partial t} F_\y(\A(\x^{(n)} + t \e_n)) + \lambda_2 < \psi''_n(t) , 
 \end{equation}
 where $\x^{(n)} = (x_1,\ldots,x_{n-1},0,x_{n+1},\ldots,x_N)^T$. Then,
 \begin{align}
     & \argmin_{\x \in [l,u]^N} \,  J^{l,u}_\Psi(\x) = \argmin_{\x \in [l,u]^N} \,  J_0(\x), \\
     & \hat{\x} \text{ local minimizer of } J^{l,u}_\Psi \text{ over } [l,u]^N \, \notag \\ 
     & \qquad \qquad  \Longrightarrow \,  \hat{\x} \text{ local minimizer of } J_0 \text{ over } [l,u]^N.
 \end{align}
\end{theorem}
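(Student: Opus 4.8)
The plan is to follow the classical recipe for proving exact continuous relaxations of $\ell_0$ problems, as in the CEL0 analysis and its Bregman generalization. The separable structure is the key simplification: since $B_\Psi^{l,u}(\x) = \sum_n \beta_{\psi_n}^{l,u}(x_n)$ (Proposition~\ref{prop:main_prop}) and $\lambda_0\|\x\|_0 = \sum_n \lambda_0\indic_{\{x_n \neq 0\}}$, the whole comparison between $J_0$ and $J_\Psi^{l,u}$ can be reduced to a coordinatewise analysis once we freeze all but one variable. Concretely, I would first establish the two pointwise facts: (i) $\beta_{\psi_n}^{l,u}(x) \leq \lambda_0 \indic_{\{x\neq0\}}$ for all $x \in [l,u]$, with equality at $x=0$ and for $x \notin (\eta_n^-,\eta_n^+)$; and (ii) $\beta_{\psi_n}^{l,u}(0) = \lambda_0 \cdot 0 = 0$ is attained (so the penalties agree at the origin). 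Fact (i) is essentially a restatement of the definition~\eqref{eq:continuous_form} of $B_\Psi^{l,u}$ as a supremum of affine-minus-Bregman minorants of $\lambda_0\|\cdot\|_0 + \mathcal I_{[l,u]^N}$, hence $B_\Psi^{l,u} \le \lambda_0\|\cdot\|_0$ on the box; combined with $F_\y, \tfrac{\lambda_2}{2}\|\cdot\|_2^2$ being common to both criteria, this already gives $J_\Psi^{l,u}(\x) \le J_0(\x)$ everywhere on $[l,u]^N$, so $\min J_\Psi^{l,u} \le \min J_0$.

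For the reverse inequality (and the equality of argmin sets), I would take $\xs$ a global minimizer of $J_\Psi^{l,u}$ and show that "rounding" it to a point $\bar\x$ with the same support pattern but where each nonzero coordinate stays put and the penalty jumps to $\lambda_0$ does not increase the objective — more precisely, that $J_0(\bar\x) \le J_\Psi^{l,u}(\xs)$ for a suitable $\bar\x \in [l,u]^N$, forcing $\min J_0 \le \min J_\Psi^{l,u}$ and hence equality, after which any minimizer of $J_\Psi^{l,u}$ with $B_\Psi^{l,u}(\xs)<\lambda_0\|\xs\|_0$ would be a contradiction, pinning $\xs$ to the region where the two penalties coincide. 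The mechanism for "rounding without increasing the objective" is exactly where the curvature hypothesis~\eqref{eq:cc} enters: fixing $\x^{(n)}$ and viewing $t \mapsto F_\y(\A(\x^{(n)}+t\e_n)) + \beta_{\psi_n}^{l,u}(t) + \tfrac{\lambda_2}{2}t^2$ on $(\eta_n^-,\eta_n^+)$, condition~\eqref{eq:cc} says this one-variable function has negative second derivative on $(\eta_n^-,0)$ and $(0,\eta_n^+)$ — because $\beta_{\psi_n}^{l,u}(t) = \psi_n(0) - \psi_n(t) + \kappa_n^\pm t$ there contributes $-\psi_n''(t)$ to the curvature — so it is strictly concave on each side, meaning its minimum over a closed subinterval is attained at an endpoint. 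Pushing each coordinate $x_n^*$ to the appropriate endpoint (either $0$, or out to where $\beta_{\psi_n}^{l,u} = \lambda_0$) produces $\bar\x$ with $J_0(\bar\x) = J_\Psi^{l,u}(\bar\x) \le J_\Psi^{l,u}(\xs)$.

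For the local-minimizer implication I would argue similarly but locally: if $\hat\x$ is a local minimizer of $J_\Psi^{l,u}$, the concavity argument on each coordinate shows $\hat\x$ cannot have a coordinate strictly inside $(\eta_n^-,0)\cup(0,\eta_n^+)$ unless that would contradict local minimality on the relevant side (a strictly concave function has no interior local min), so every coordinate of $\hat\x$ is either $0$ or lies outside $(\eta_n^-,\eta_n^+)$; on that set $B_\Psi^{l,u}(\hat\x) = \lambda_0\|\hat\x\|_0$ and moreover $\beta_{\psi_n}^{l,u} \le \lambda_0\indic_{\{\cdot\neq0\}}$ with matching values in a neighborhood, so $J_0 \geq J_\Psi^{l,u}$ near $\hat\x$ with equality at $\hat\x$, giving local minimality of $J_0$.

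The main obstacle is the careful bookkeeping at the \emph{boundary cases} in Proposition~\ref{prop:main_prop}: the definitions of $\eta_n^\pm$ and the slopes $\kappa_n^\pm$ branch according to whether $\alpha_n^\pm$ lies inside or outside $[l,u]$, and one must check that the concavity conclusion and the endpoint-value identity $\beta_{\psi_n}^{l,u}(\eta_n^\pm) = \lambda_0$ (or $=\beta$ continuously matching the flat part) hold uniformly across all branches — including the degenerate subcases $l=0$ or $u=0$ excluded from the slope formulas — and that the "rounding to an endpoint" always lands inside $[l,u]^N$. Handling $l=-\infty$ or $u=+\infty$, where the flat branch may be empty, also needs a separate (easy) check. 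Once the coordinatewise picture is nailed down, the assembly into the global and local statements is routine.
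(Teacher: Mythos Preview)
Your proposal is correct and follows essentially the same approach as the paper: the paper's proof lists exactly the three ingredients you identify --- (i) $J_\Psi^{l,u}\le J_0$ on $[l,u]^N$ from the minorant definition of $B_\Psi^{l,u}$, (ii) equality of the two criteria whenever every coordinate lies outside $(\eta_n^-,0)\cup(0,\eta_n^+)$ (from Proposition~\ref{prop:main_prop}), and (iii) strict concavity of $t\mapsto J_\Psi^{l,u}(\x^{(n)}+t\e_n)$ on each of $(\eta_n^-,0)$ and $(0,\eta_n^+)$ under~\eqref{eq:cc} --- and then defers the assembly to the unconstrained analogue in~\cite{Exact2024}. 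Your rounding-to-endpoints and local-minimizer arguments are precisely the standard way to combine these three facts, so there is no substantive difference.
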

\begin{proof}
     This result is a direct generalization of~\cite[Theorem 9]{Exact2024}. It relies on three facts: i) $J^{l,u}_\Psi(\x) \leq J_0(\x)$  $\forall \x \in [l,u]^N$ (by definition of $ B_\Psi^{l,u}$), ii) $\forall \x \notin \prod_{n\in [N]} (\eta_n^-,0) \cup (0,\eta_n^+)$, $J^{l,u}_\Psi(\x) = J_0(\x)$  (from Proposition~\ref{prop:main_prop}) and iii) under~\eqref{eq:cc} $\forall n \in [N]$, $t \mapsto J^{l,u}_\Psi(\x^{(n)} + t\e_n )$ is strictly concave on both $(\eta_n^-,0)$ and $(0,\eta_n^+)$.
\end{proof}

\section{Minimizing $J_\Psi^{l,u}$} \label{sec:minminiz_JPsi}

In this section, we discuss the optimization of the relaxed problem through two different optimization algorithms: the forward backward splitting (FBS) \cite{CombettesWajs2005}
 and the iteratively reweighted $\ell_1$ (IRL1) \cite{OchsIRL1}.

\subsection{Forward-Backward Splitting Algorithm} \label{sec:FBS}

The FBS or proximal gradient algorithm consists of the following time stepping scheme for a given initialisation $\x^0$ and step-size $\rho>0$:
\begin{equation*}
    \x^{k+1} \in \prox_{\rho ( B_\Psi^{l,u} + \mathcal{I}_{[l,u]^N}) } \left( \x^k - \rho (  \A^T \nabla F_\y(\A\x^k) + \lambda_2 \x^k) \right).
\end{equation*}
It requires the ability to efficiently evaluate the proximal operator of $B_\Psi^{l,u} + \mathcal{I}_{[l,u]^N}$. Since \BR{} is a separable penalty, this computation reduces to evaluate independent one-dimensional proximal operators, for which a closed-form expression can be derived (for standard choices of $\Psi$) from the following proposition.
\begin{proposition}\label{prop:prox}
    Let $\rho > 0$ and $n \in [N]$. For $x \in \R$, the proximal operator of $\rho \beta^{l,u}_{\psi_n}$ is given by
\begin{equation}\label{eq:prox-box}
\prox_{\rho \beta^{l,u}_{\psi_n} }(x) =\argmin_{v \in [l,u] \cap \mathcal{V}(x)} \left\lbrace \beta^{l,u}_{\psi_n}(v) + \frac{1}{2\rho } (v - x)^2\right\rbrace
\end{equation}
where \(\mathcal{V}(x) = \{l, 0, x, u\} \cup S_x\) with \(S_x = \{v \in \mathbb{R} : v - \rho\psi_n'(v) = x - \rho \kappa^\pm_n\) \}.
\end{proposition}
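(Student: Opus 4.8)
The plan is to reduce the computation of $\prox_{\rho\beta^{l,u}_{\psi_n}}$ to a finite search over a small candidate set, exploiting the fact that $\beta^{l,u}_{\psi_n}$ is piecewise-defined with only three pieces on $[l,u]$ (two smooth ``concave'' arcs meeting at $0$, and a constant plateau equal to $\lambda_0$ outside $(\eta_n^-,\eta_n^+)$), plus the value at the single isolated point $\mathbf 0$ where the $\ell_0$ term jumps. First I would write the proximal problem as $\min_{v\in[l,u]}\{\beta^{l,u}_{\psi_n}(v)+\tfrac1{2\rho}(v-x)^2\}$; since $\beta^{l,u}_{\psi_n}$ is lower semicontinuous, coercive is automatic on the compact-or-closed feasible set, and one must be slightly careful when $l=-\infty$ or $u=+\infty$, but the quadratic term dominates and a minimizer exists. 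The key observation is that any minimizer $v^\star$ lies in the closure of one of the finitely many pieces, so it is either (i) an endpoint of that piece's interval of definition — namely one of $l$, $0$, $u$, $\eta_n^-$, $\eta_n^+$ — or (ii) an interior stationary point of the corresponding smooth branch of the objective.

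Next I would carry out the first-order analysis on each of the two nonconstant branches. On $x\in(\eta_n^-,0]$ the penalty is $\psi_n(0)-\psi_n(v)+\kappa_n^- v$, so the objective restricted to this branch is $\psi_n(0)-\psi_n(v)+\kappa_n^- v+\tfrac1{2\rho}(v-x)^2$, whose derivative vanished iff $v-\rho\psi_n'(v)=x-\rho\kappa_n^-$; symmetrically on $[0,\eta_n^+)$ one gets $v-\rho\psi_n'(v)=x-\rho\kappa_n^+$. These are precisely the conditions defining $S_x$, hence every interior stationary candidate belongs to $\{l,0,x,u\}\cup S_x$ — the inclusion of $x$ itself is a convenient harmless over-inclusion that keeps the statement clean, while $\eta_n^\pm$ need not be listed separately because on the plateau the objective is $\lambda_0+\tfrac1{2\rho}(v-x)^2$, strictly convex in $v$, so its minimum over the plateau portion of $[l,u]$ is attained at whichever of $l$, $u$, $\eta_n^-$, $\eta_n^+$ or the unconstrained projection point $x$ is feasible there; since $x$ is already in the candidate list and $\eta_n^\pm$ are boundary points shared with the smooth arcs where stationarity is governed by $S_x$, one checks that restricting the argmin to $[l,u]\cap\mathcal V(x)$ loses nothing. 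Finally I would assemble: the global minimizer over $[l,u]$ is the minimizer of the objective over the finite set $[l,u]\cap\mathcal V(x)$, which is exactly \eqref{eq:prox-box}; and $\mathcal V(x)=\{l,0,x,u\}\cup S_x$ by the branchwise computation, noting that on the branch with slope $\kappa_n^-$ we solve against $x-\rho\kappa_n^-$ and on the one with slope $\kappa_n^+$ against $x-\rho\kappa_n^+$, which is what ``$x-\rho\kappa_n^\pm$'' abbreviates.

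The main obstacle I anticipate is handling the boundary/degenerate cases cleanly rather than the core argument: (a) when $l=0$ or $u=0$ the corresponding branch collapses and the slope formulas $\kappa_n^\pm$ in Proposition~\ref{prop:main_prop} are not defined, so one restricts to the relevant half-line; (b) when $\eta_n^-=l$ or $\eta_n^+=u$ the plateau on that side disappears and $\kappa_n^\pm=\psi_n'(\alpha_n^\pm)$, so the stationarity equation's solution coincides with $\alpha_n^\pm=\eta_n^\pm$, and one must verify this does not create a spurious extra candidate; (c) the set $S_x$ can be empty, a singleton, or contain several roots depending on the curvature of $\psi_n$ relative to $1/\rho$, but since we only claim that the argmin is attained \emph{within} $[l,u]\cap\mathcal V(x)$, multiplicity is harmless — we simply take the best element. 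I would also remark that for the standard choices of $\psi_n$ (the $p$-power, Shannon entropy, Boltzmann--Shannon / KL generators) the equation $v-\rho\psi_n'(v)=c$ is scalar and either linear, solvable in closed form, or reducible to a Lambert-$W$ evaluation, so $S_x$ is explicitly computable and \eqref{eq:prox-box} yields a genuine closed form; this is the practical payoff used by the FBS scheme of Section~\ref{sec:FBS}.
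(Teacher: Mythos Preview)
Your proof is correct and follows essentially the same route as the paper: both reduce to first-order necessary conditions on a piecewise-smooth objective over $[l,u]$ and read off the finite candidate set. The paper packages this via the single inclusion $0\in\tfrac{1}{\rho}(v-x)+\partial\beta^{l,u}_{\psi_n}(v)+\mathcal{N}_{[l,u]}(v)$ (subdifferential plus normal cone), while you unpack the same computation branch by branch and are in fact more explicit about why $x$ appears (it handles the constant plateau) and why $\eta_n^\pm$ need not be listed separately---points the paper's terse proof leaves implicit.
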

\begin{proof}
The proof can be found in Appendix~\ref{app:proof-prox}.
\end{proof}
The functional $J_\Psi^{l,u}$ satisfies the Kurdyka-Łojasiewicz property, hence taking $0<\rho< 1/L$, with $L$ being the Lipschtiz constant of the gradient of $F_\y(\A \cdot) + \lambda_2/2 \|\cdot\|_2^2$, the  sequence $\{\x^k\}_k$ generated by FBS converge to a critical point of $J_\Psi^{l,u}$~\cite{Attouch2013}. Alternatively, a backtracking strategy can be used (see Section \ref{sec:experiments}) to estimate the step-size at each iteration, which helps improving the convergence speed.

\subsection{Iteratively Reweighted $\ell_1$}  \label{sec:IRL1}

The Iteratively Reweighted $\ell_1$ (IRL1) algorithm belongs to the class of majorization-minimization (MM) algorithms, which iteratively construct minimizing sequences of surrogate functions that upper-bound the original objective function. The optimization process consists of two main steps. First, in the \textit{majorization step}, the objective function is upper-bounded by a surrogate equal to it at the current point.  For symmetric \BR{} (i.e., $\beta_{\psi_n}^{l,u}(x) = \beta_{\psi_n}^{l,u}(|x|) \; \forall (x,n) \in [l,u] \times [N]$, as those we use in our experiments), we consider the following weighted $\ell_1$-norm
\begin{equation*} 
\tilde J(\x) = F_\y(\A\x) + \sum_{n=1}^N w_n |x_n| + \frac{\lambda_2}{2}\|\x\|^2_2
, 
\end{equation*} where the weights $\{w_n\}_n$ are such that
\( w_n \in \partial \beta_{\psi_n}^{l,u}(|x_n|).\)
Then, the \textit{minimization step} consists of minimizing $\tilde J$ over the set $[l,u]^N$. This is achieved using the Projected FBS algorithm by simply considering the (closed-form) proximal operator of
\(\x \mapsto \sum_{n=1}^N w_n |x_n| + \frac{\lambda_2}{2}\|\x\|^2_2\). Again as the Kurdyka-
Łojasiewicz property holds for $J_\Psi^{l,u}$, convergence of the iterative scheme to a critical point can be shown following \cite{OchsIRL1}.

\section{Experiments} \label{sec:experiments}

\subsection{Data Generation}\label{sec:data-generation}
\textit{Forward Matrix Generation}: We generate a the matrix \( \A \sim \mathcal{N}(\textbf{0}, \bm{\Sigma}) \) of size \( 500 \times 1000 \) from a multivariate normal distribution with mean zero and covariance matrix \( \bm{\Sigma} \). The covariance \( \bm{\Sigma} \) follows an exponential correlation model, where its entries are defined by \( \sigma_{ij} = \rho^{|i-j|} \) for \( 1 \leq i, j \leq N \), with the parameter \( \rho \in [0,1] \) controlling the correlation strength. We then consider two differents settings to generate the observation vector \( \y \in \R^M \) according to the data term $F_{\y}$ of interest.
\begin{itemize}
    \item \textit{Least-Square (LS)}, $F_\y(\A\x)=\frac{1}{2}\|\A\x-\y\|^2$
    : following~\cite{guyard:24}, we define a sparse vector \( \x^* \in \R^N \) with \( k^* \in \N \) non-zero equispaced  entries, each sampled from a uniform distribution in the interval \( [l, u] \). The observation $\y$ is then generated by \( \y = \A \x^* + \bm{\varepsilon} \), where \( \varepsilon_i \sim N(0, \sigma^2) \)
    . The signal-to-noise  ratio (SNR) measure defined as
    \[
    \mathrm{SNR} = \frac{\operatorname{Var}(\A \x^*)}{\operatorname{Var}(\bm{\varepsilon})} = \frac{({\x^*})^T \bm{\Sigma} \x^*}{\sigma^2}
    \]
    is used to control the level of noise on the data $\y$.
    \item \textit{Logistic Regression (LR)}, $F_\y(\A\x)=\sum_{m=1}^M \log\left(1+\exp({[\A\x]_m)} \right)-y_m[\A\x]_m$: As for LS, we generate a \( k^* \)-sparse vector \( \x^* \) with equispaced non-zero entries, each equal to 1. Each coordinate of the label vector \( \y \) is binary, with \( y_m \in \{-1, 1\} \), where \( y_m = 1 \) is determined with  probability
    \[
    P\left(y_m = 1 \mid \a_m \right) = 1/\left( 1 + \e^{-s\left< \a_m, \x^* \right>}\right),
    \]
    where \( \a_m \) is the \( m \)-th row of \( \A \) and \( s > 0 \) controls the SNR. The  labels $\y$ are  sampled from a Bernoulli distribution.
\end{itemize}

\subsection{Algorithmic Setting and Parameter Selection}

\paragraph*{Benchmarked Algorithms} We compare the performance of the following four methods in solving Problem~\eqref{eq:problem_setting} for 20 realizations of forward matrices $\A$ and vectors $\y$. 
\begin{itemize}
     \item \textit{IHT}~\cite{Attouch2013} on the original problem,
    \item \textit{FBS} on the proposed exact relaxation (Section \ref{sec:FBS}) 
        \item \textit{IRL1} on the proposed exact relaxation (Section \ref{sec:IRL1}) 
            \item \textit{BnB}~\cite{guyard:24} on the original problem. It can guarantee the convergence to a global minimizer for small-scale problems.

\end{itemize}

\textit{Parameters}:
For LS problems, initialization was set as $\x_0=\textbf{0}$   for all algorithms. The same choice was also made for LR problems for all algorithms except for 
IHT, where the initialization $\x_0=\A^T\y$ was considered. Indeed, as shown in~\cite[Lemma 1]{Exact2024}, $\x_0=\textbf{0}$ is a local minimizer of $J_0$ (note that $\bm{0}$ is, in contrast  and algorithms that minimize $J_0$ (as IHT) may thus be stuck at the initial point $\x_0 = \bm{0}$. As for FBS,  we implemented IHT with a backtracking strategy to accelerate convergence and to favour (upon large initial time steps) escape from the local minimizer $\x_0=\textbf{0}$ for the LS case. As this turned out to be harder for LR, a different initialization was chosen instead. 
We fixed the convergence tolerance to a relative change between consecutive iterates below  $10^{-7}$. 

The hyperparameter \( \lambda_0 \) is set as \( \lambda_0 = \alpha F_\mathbf{y}(\mathbf{0}) \), with $\alpha \in (0,1)$ chosen so that the BnB solver provides a solution with a support cardinality close to the ideal \( k^* \), in all experiments. For the LS problem, we set \( \lambda_2 = 0 \), focusing purely on sparsity, while for the LR problem we used \( \lambda_2 =1 \). 
As far as the generating functions $\psi_n $ are considered, we considered in the following $\psi_n(x) = (\gamma_n/2) x^2$, with $\gamma_n = \lambda_2 + \|\a_n\|^2_2$ for LS and $\gamma_n = \lambda_2 + 0.25 \|\a_n\|^2_2 $ for LR. This choice ensures that the exact relaxation condition~\eqref{eq:cc} is satisfied.

\subsection{Results and Discussion}

\begin{figure*}[!ht]
    \centering
    \begin{subfigure}[b]{0.48\linewidth}
        \centering
        \includegraphics[width=\linewidth]{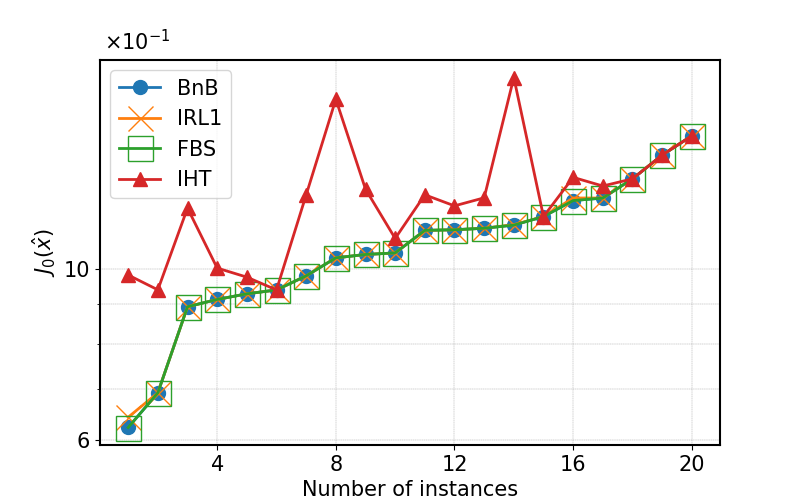}
    \end{subfigure}
    \begin{subfigure}[b]{0.48\linewidth}
        \centering
        \includegraphics[width=\linewidth]{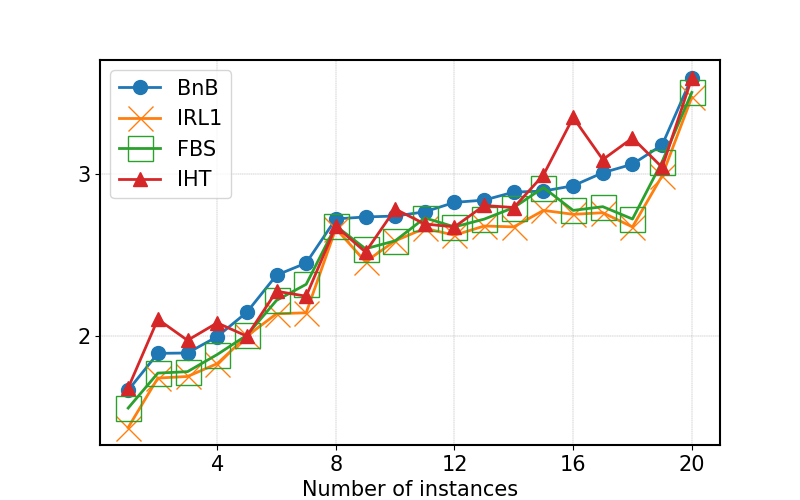}
    \end{subfigure}
    \caption{LS: (ordered) values $J_0(\hat\x)$ obtained by each method along the $20$ problem instances. For the left plot $\lambda_0 = 2 \times 10^{-2} F_\y(\textbf{0})$ and $k^*=10$. For the right plot $\lambda_0 = 5 \times 10^{-3}F_\y(\textbf{0})$ and $k^*=25$. Finally, $[l,u] = [-1.5, 1.5]$, $\rho=0.9$ and $\mathrm{SNR} = 10$. \label{fig:Ls-resutls}}
\end{figure*}

\begin{figure*}[!ht]
    \centering
    \begin{subfigure}[b]{0.48\linewidth}
        \centering
        \includegraphics[width=\linewidth]{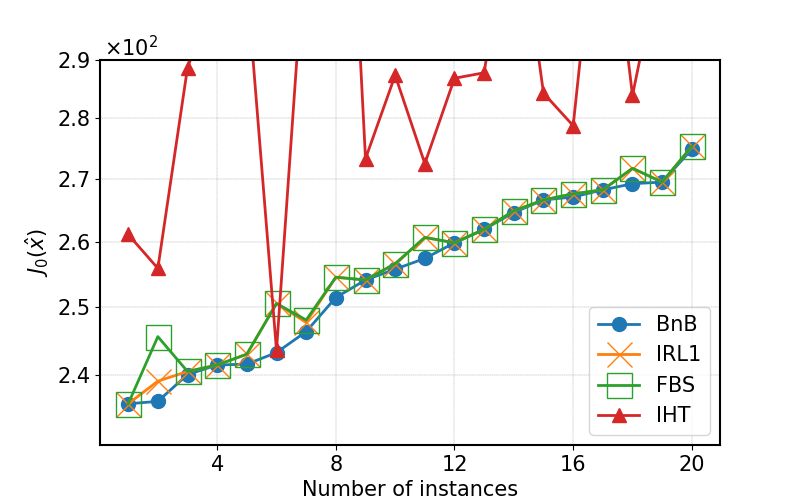}
    \end{subfigure}
    \begin{subfigure}[b]{0.48\linewidth}
        \centering
        \includegraphics[width=\linewidth]{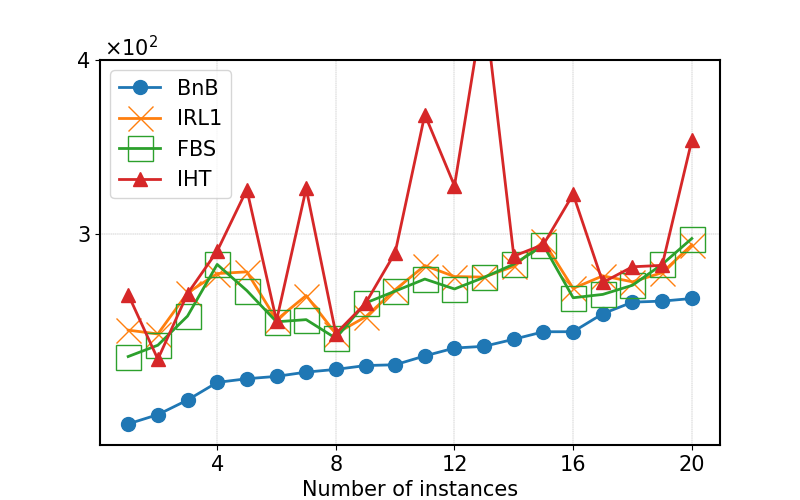}
    \end{subfigure}
    \caption{LR: (ordered) values $J_0(\hat\x)$ obtained by each method along the $20$ problem instances. For the left plot $\lambda_0 = 2.5 \times 10^{-2} F_\y(\textbf{0})$ and $k^*=7$. For the right  $\lambda_0 = 1.5 \times 10^{-2} F_\y(\textbf{0})$ and $k^*=25$. Finally $[l,u] = [-1, 1]$, $\lambda_2 = 1$, $\rho = 0.9$, $s=1$. \label{fig:Lr-resutls}}
\end{figure*}

We assess the quality of the  minimization using the value of the original function $J_0$ at convergence. For each algorithm, we thus computed this value for each realization of $(\A, \y)$. 

\textit{BnB with certification of the global solution:} The left panels of Figures~\ref{fig:Ls-resutls} and~\ref{fig:Lr-resutls} present results for  LS and LR problems when $k^* = 10$ and $k^* = 7$, respectively. In these cases, the BnB solver consistently finds (and certifies) the global optimum. From these results we can thus observe that the minimization of the proposed exact relaxation with either IRL1 or FBS also often leads to the global minimizer. In contrast, this is not the case when directly tackling the original problem with IHT.

\textit{BnB without certified solutions:} In the right panels of Figures~\ref{fig:Ls-resutls} and \ref{fig:Lr-resutls}, we consider the case of less sparse vectors ($k^* = 25$), where BnB fails to certify optimality within the given time limit (30 minutes, whereas FBS and IRL1 solve the relaxed problem in an average time of 2.9s/3.87s for the LS cases and 0.63s/0.62s for the LR cases, respectively). In the LS setting, BnB appears to be far from the global optimum, as minimizing the relaxed problem yields solutions with a lower objective function value. Conversely, in the LR setting, while BnB does not certify optimality, it still finds solutions with the lowest objective function value, followed by methods minimizing the proposed relaxations. 
The difference in performance of the BnB between LS and LR can be attributed to the presence of the $\ell_2$ term in the case of LR ($\lambda_2 >0$).
 As shown in~\cite{guyard:24}, the incorporation of the $\ell_2$ term improves the relaxation quality for pruning tests, which explains why BnB performs better in the LR setting. This is further supported by the additional experiment we provide in Appendix~\ref{app:additional-exp}. There, we set $\lambda_2 >0$ in the LS experiment with $k^* =25$ and one can observe that the BnB performs better than the other methods.

\section{Conclusion}
We introduced the constrained $\ell_0$ Bregman relaxation  which continuously approximate the $\ell_0$ pseudo-norm within a bounded domain $[l, u]^N$. By replacing the $\ell_0$ pseudo-norm with this constrained \BR{}, a continuous optimization problem that preserves the same global minimizers as the original one and exhibit fewer local minimizers is obtained.
This relaxation makes the problem more amenable to standard non-convex optimization algorithms, such as the proximal gradient method and the iteratively reweighted $\ell_1$ (IRL1) algorithms. Through several numerical experiments, we demonstrated that our approach compares well to Branch-and-Bound (BnB) approaches, achieving strong agreement with certified global solutions in small-scale problem settings.

\appendix 

\subsection{Proof of Proposition~\ref{prop:main_prop}}\label{proof:main_prop}

The separability of \( B_\Psi^{l,u} \) follows from the separability of the Bregman divergence, the \(\ell_0\) pseudo-norm, and the indicator function. We thus provide the proof in the one-dimensional case. Let \( (x,n) \in [l,u] \times [N]\). For \(z \in \mathrm{dom}(\psi_n)\), we have that \(\alpha - d_{\psi_n}(\cdot, z) \leq \lambda_0 |\cdot|_0 + \mathcal{I}_{[l,u]}(\cdot)\) if and only if \(\alpha \leq d_{\psi_n}(\cdot, z) + \lambda_0 |\cdot|_0 + \mathcal{I}_{[l,u]}(\cdot)\). Thus, the supremum with respect to \(\alpha\) in the definition of \BR{} for \(z \in \mathrm{dom}(\psi_n)\) is given by
\begin{equation}\label{eq:alpha-formula}
   \alpha(z) = \inf_{x \in \mathrm{dom}(\psi_n)} \lambda_0 |x|_0 + d_{\psi_n}(x,z) + \mathcal{I}_{[l,u]}(x).
\end{equation}
We distinguish two cases for solving Problem~\eqref{eq:alpha-formula}:

\begin{itemize}
    \item If \( z\in [l, u]\), since \(\psi_n\) is strictly convex, the function  \(d_{\psi_n}(\cdot, z)\) is also strictly convex for any~\(z\). Consequently, the function
   \(
   x \mapsto \lambda_0 |x|_0 + d_{\psi_n}(x, z) + \mathcal{I}_{[l,u]}(x)
   \)
   has two local minimizers: one at \(x = 0\) with value \(d_{\psi_n}(0, z)\) and one at \(x = z\) with value \(\lambda_0\). Thus, we get
\begin{align}\label{eq:1}
   \alpha(z) &= \min(\lambda_0, d_{\psi_n}(0, z)) \notag\\
             & = 
   \begin{cases}
       d_{\psi_n}(0, z), & \text{if } z \in [\alpha_n^-, \alpha_n^+], \\
       \lambda_0, & \text{otherwise}.
   \end{cases}
\end{align}
where \([\alpha_n^-, \alpha_n^+]\) is the interval of the \(\lambda_0\)-sublevel set of the function $d_{\psi_n}(0,z)$ 
\item If \( z \notin [l, u] \), the minimum is reached either at \(x = 0\) with value \(d_{\psi_n}(0, z)\) or at \(x = u\) (or \(x = l\)) with value \(\lambda_0 + d_{\psi_n}(u, z)\) (or \(\lambda_0 + d_{\psi_n}(l, z)\), respectively). Note that the choice \(x = u\) or \(x = l\) depends  on the sign of \(z\). We thus focus on the non-negative case $x=u$. We have
   \begin{equation}
       \alpha(z) = \min(\lambda_0 + d_{\psi_n}(u, z), d_{\psi_n}(0, z)).
   \end{equation}
First, let us compare the quantities \( d_{\psi_n}(0, z) \) and \( \lambda_0 + d_{\psi_n}(u,z) \). For $z \geq 0$, we get:
\begin{align*}
   d_{\psi_n}(0,z) &\leq \lambda_0 + d_{\psi_n}(u,z) \\&
   \implies z \leq (\psi_n')^{-1}\left(\frac{ \lambda_0 + \psi_n(u)-\psi_n(0)}{u}\right),
\end{align*}
which holds since $\psi_n'$ is strictly increasing (as $\psi_n$ strictly convex).
Similarly for $z\leq 0$, $ d_{\psi_n}(0,z) \leq \lambda_0 + d_{\psi_n}(l,z)$ implies $ z \geq (\psi_n')^{-1}\left(\frac{ \lambda_0 + \psi_n(l)-\psi_n(0)}{l}\right)$.
Let us now denote by 
$\xi^+_n := (\psi_n')^{-1}\left(\frac{ \lambda_0 + \psi_n(u)-\psi_n(0)}{u}\right)$ and $\xi^-_n := (\psi_n')^{-1}\left(\frac{ \lambda_0 + \psi_n(l)-\psi_n(0)}{l}\right)$. We have
\begin{equation}\label{eq:2}
\alpha(z) =   \begin{cases}
       d_{\psi_n}(0, z), & \text{if } z \in [\xi_n^-, \xi_n^+], \\
       \lambda_0 + d_{\psi_n}(u,z), & \text{if } z \geq \xi_n^+ \\
       \lambda_0 + d_{\psi_n}(l,z), & \text{if }  z \leq \xi_n^-.
   \end{cases}
\end{equation}
\end{itemize}

In order to combine the two cases above, we analyse the order of the quantities $\xi^-_n, \xi^+_n, \alpha_n^-$, and $\alpha_n^+$. From the definition of the Bregman divergence, we have $d_{\psi_n}(u,v) \geq 0 \; \forall u,v \in \mathrm{dom}(\psi_n)$.
Hence, 
\begin{align*}
   z \in [\alpha_n^-, \alpha_n^+] &\implies d_{\psi_n}(0,z) \leq \lambda_0 \\
   &\implies 
   \left\lbrace \begin{array}{ll}
        d_{\psi_n}(0,z)  & \leq \lambda_0 + d_{\psi_n}(u,z)  \\
        d_{\psi_n}(0,z)  & \leq \lambda_0 + d_{\psi_n}(l,z) 
   \end{array} \right.\\&
   \implies z \in [\xi^-_n, \xi^+_n].
\end{align*}
Therefore, we have \(\xi^-_n \leq \alpha_n^- \leq 0 \leq \alpha_n^+ \leq \xi^+_n\).

Now, let \(u \in [0, \xi^+_n]\). Since \(\psi_n'\) is non-decreasing, we have \(\psi_n'(u) \leq \psi_n'(\xi^+_n)\).
From the definition of \(\alpha_n^+\), we have \(\lambda_0 = d_{\psi_n}(0,\alpha_n^+)\), which implies \(\lambda_0 - \psi_n(0) = \alpha_n^+\psi_n'(\alpha_n^+) - \psi_n(\alpha_n^+)\).
By definition of \(\xi^+_n\), we have \(\psi_n'(\xi^+_n) = \frac{\lambda_0 + \psi_n(u) - \psi_n(0)}{u}\). Thus, we get
\[
u\psi_n'(\xi^+_n) = \alpha_n^+\psi_n'(\alpha_n^+) - \psi_n(\alpha_n^+) + \psi_n(u).
\]
Thus,
\begin{align*}
   u\psi_n'(u) &\leq u\psi_n'(\xi^+_n) = \alpha_n^+\psi_n'(\alpha_n^+) - \psi_n(\alpha_n^+) + \psi_n(u) \\
   &\implies u\psi_n'(u) - \psi_n(u) \leq \alpha_n^+\psi_n'(\alpha_n^+) - \psi_n(\alpha_n^+) \\&
   \implies u \leq \alpha_n^+,
\end{align*}
where the last inequality comes from the fact that \( g(z) = z\psi_n'(z) - \psi_n(z) \) is non-decreasing for \( z \geq 0 \). Since \(\psi_n\) is twice differentiable, \(g(z)\) admits a derivative given by \( g'(z) = z\psi_n''(z) \). The convexity of \(\psi_n\) implies that \(\psi_n''(z) \geq 0\), hence \( g'(z) \geq 0 \), which shows that \( g(z) \) is indeed non-decreasing for \( z \geq 0 \). Similarly, for $l \in [\xi^-_n, 0]$, we can show that $l \geq \alpha_n^-$. Hence there are only two possibilities for $u$ (resp., $l$): either $u \leq \alpha_n^+$ (resp., $l \geq \alpha_n^-$), or $u \geq \xi_n^+$ (resp., $l \leq \xi_n^-$).

Combining this with~\eqref{eq:1} and~\eqref{eq:2}, we obtain, for \( z \geq 0 \):
\begin{itemize}
   \item If \( u \leq  \xi^+_n \) (and so $u \leq \alpha_n^+$),  we have
   \begin{equation}\label{eq:case1-sup}
       \alpha(z) = \begin{cases}
           d_{\psi_n}(0,z), & \text{if } z \in [0, \xi^+_n] \\
           \lambda_0 + d_{\psi_n}(u,z), & \text{otherwise}
       \end{cases}
   \end{equation}
   \item If \( u \geq \xi^+_n \),
   \begin{equation}\label{eq:case2-sup}
       \alpha(z) = \begin{cases}
           d_{\psi_n}(0,z), & \text{if } z \in [0, \alpha_n^+] \\
           \lambda_0, & \text{if } z \in [\alpha_n^+, u] \\
           \lambda_0 + d_{\psi_n}(u,z), & \text{if } z \geq u .
       \end{cases}
   \end{equation}
\end{itemize}
We have similar cases for \( z \leq 0 \), by considering the quantities \(\alpha_n^-\), \(l\), and \(\xi^-_n\).

To complete the proof, we need to compute the supremum with respect to \(z\):
\begin{equation}\label{eq:sup-z}
\beta_{\psi_n}^{l,u}(x) = \sup_{z \in \R} \left( \alpha(z) - d_{\psi_n}(x, z) \right).
\end{equation}
We distinguish several cases based on equations~\eqref{eq:case1-sup} and~\eqref{eq:case2-sup}:

\begin{itemize}
   \item  If \( u \leq  \xi^+_n \) (and so $u \leq \alpha_n^+$) and $x \in [0,u]$, from~\eqref{eq:case1-sup}, we have the following two cases:
   
       - For \(z \in [0, \xi_n^+]\), we have:
       \[
         \sup_{z \in [0, \xi_n^+]} \left( d_{\psi_n}(0, z) - d_{\psi_n}(x, z) \right),
       \]
       which, from the definition of \(d_{\psi_n}\), simplifies to:
       \begin{align*}
             & \mkern-8mu \psi_n(0) - \psi_n(x) + \sup_{z \in [0, \xi_n^+]} \left( - \psi_n'(z) (0 - z) + \psi_n'(z) (x - z) \right) \\
           = & \psi_n(0) - \psi_n(x) + \sup_{z \in [0, \xi_n^+]} \left( \psi_n'(z) x \right).
       \end{align*}
       Since \(\psi_n\) is strictly convex, \(\psi_n'\) is increasing, the supremum is attained at \(z = \xi_n^+\). Therefore, we have:
       \begin{equation}\label{eq:case1-sup-z}
           b_1(x) := \psi_n(0) - \psi_n(x) + \psi_n'( \xi_n^+ ) x,
       \end{equation}
       
       - For \(z \geq \xi_n^+\), we have:
       \[
       \sup_{z \geq \xi_n^+} \left( \lambda_0 + d_{\psi_n}(u, z) - d_{\psi_n}(x, z) \right),
       \]
       which, from the definition of \(d_{\psi_n}\) and the fact that $x \leq u$, simplifies to:
\begin{align*}
    \; & \lambda_0 + \psi_n(u) - \psi_n(x)  
   + \sup_{z \geq \xi_n^+} \left( \psi_n'(z) (x - u) \right) \\& 
   = \; \lambda_0 + \psi_n(u) - \psi_n(x) + \psi_n'(\xi_n^+) (x - u) 
   :=  \;b_2(x).
\end{align*}

       
\item If \( u \geq \xi^+_n \geq \alpha_n^+ \), and $x \in [0,u]$, then we have the following cases from~\eqref{eq:case2-sup}:

   - For \(z \in [0, \alpha_n^+]\), we have:
   \[
   \sup_{z \in [0, \alpha_n^+]} \left( d_{\psi_n}(0, z) - d_{\psi_n}(x, z) \right),
   \]
   which, from the definition of \(d_{\psi_n}\), simplifies to:
   \begin{align*}
       \psi_n(0) - \psi_n(x) + \sup_{z \in [0, \alpha_n^+]} \left( \psi_n'(z) x \right).
   \end{align*}
   Since \(\psi_n\) is strictly convex, \(\psi_n'\) is increasing, the supremum is attained at \(z = \alpha_n^+\). Therefore, we have:
   \begin{equation}\label{eq:case3-sup-z}
       b_3(x) := \psi_n(0) - \psi_n(x) + \psi_n'(\alpha_n^+) x,
   \end{equation}
   - For \(z \in [\alpha_n^+, u]\), we have:
   \[
   \sup_{z \in [\alpha_n^+, u]} \left( \alpha(z) - d_{\psi_n}(x, z) \right) = \sup_{z \in [\alpha_n^+, u]} \left( \lambda_0 - d_{\psi_n}(x, z) \right),
   \]
   which results in:
   \begin{equation}\label{eq:case5-sup-z}
       b_4(x) := \begin{cases}
           \lambda_0 - d_{\psi_n}(x, \alpha_n^+), & \text{if } x \leq \alpha_n^+, \\
           \lambda_0, & \text{if } x \in [\alpha_n^+, u], 
       \end{cases}
   \end{equation}

      - For \(z \geq u\), since $x \leq u$, the supremum is attained at $\alpha_n^+$, given by $b_5(x) = \lambda_0 - d_{\psi_n}(x, \alpha_n^+)$.
\end{itemize}

As a conclusion, we have:

\begin{itemize}
   \item If \( u \leq \xi_n^+ \)   (so is $u \leq \alpha_n^+$. Since \(b_2(x)=\lambda_0 + \psi_n(u) - \psi_n(x) + \psi_n'(\xi_n^+)(x - u) = \psi_n(0) - \psi_n(x) + \psi_n'(\xi_n^+) x = b_1(x)\) (by definition of $\xi_n^+$), which mean that $\beta_{\psi_n}^{l,u}(x) = \max \left(b_1(x), b_2(x) \right) = b_1(x)$. Hence, we have that
   \begin{equation*}\label{eq:beta-formula1}
   \beta_{\psi_n}^{l,u}(x) = 
       \psi_n(0) - \psi_n(x) + \psi_n'(\xi_n^+)x \quad \forall  x \in [0,u]
   \end{equation*}
   
   \item If \( u \geq \xi_n^+ \), we have
   \begin{align}
   \beta_{\psi_n}^{l,u}(x) &= \max\left(b_3(x), b_4(x), b_5(x) \right) \notag \\
              &= \begin{cases}
                   \psi_n(0) - \psi_n(x) + \psi_n'(\alpha_n^+) x, & \text{if } x \in [0, \alpha_n^+], \\
                   \lambda_0, & \text{if } x \in [\alpha_n^+, u].
                 \end{cases}
                 \notag
   \end{align}

   This follows from the fact that \(\lambda_0 - d_{\psi_n}(x, \alpha_n^+) = \psi_n(0) - \psi_n(x) + \psi_n'(\alpha_n^+) x\) (by definition of \(\alpha_n^+\)). 

\end{itemize}
Performing similarly for \( x \in [l,0]\), we thus complete the proof.

\subsection{Proof of Proposition~\ref{prop:prox}}\label{app:proof-prox}

The proof follows from the fact that (see~\cite[Section 3.2]{Exact2024}):
\begin{equation*}
    \partial \beta_{\psi_n}^{l, u}(v) = \begin{cases}
          \{-\psi'(v) + \kappa_n^-\} & \text{if } v < 0 \\
          \{-\psi'(v) + \kappa_n^+\} & \text{if } v > 0 \\
         \{-\psi'(0)\} + [\kappa^-_n, \kappa^+_n] & \text{if } v=0
    \end{cases}
\end{equation*}

Recalling that 
\[
     \prox_{\rho \beta^{l,u}_{\psi_n}}(x) = \argmin_{v \in [l,u]} \left\lbrace \beta^{l,u}_{\psi_n}(v) + \frac{1}{2\rho} (v - x)^2 \right\rbrace,
\]
the first-order optimality condition states that
\[
    0 \in \frac{1}{\rho}(v-x) + \partial \beta_{\psi_n}^{l,u}(v) + \mathcal{N}_{[l,u]}(v),
\]
where $\mathcal{N}_{[l,u]}$ is the normal cone\footnote{The normal cone of a set $\Cc \subset \mathbb{R}$ is defined as: 
$
    \mathcal{N}_\Cc(x) = \left\lbrace t \in \mathbb{R} \; | \; \left\langle t, z-x \right\rangle \leq 0 \; \forall z \in \Cc \right\rbrace
$.
} to  $[l,u]$, defined as
\[
    \mathcal{N}_{[l,u]}(v) = 
    \begin{cases} 
        \{0\} & \text{if } v \in (l,u), \\
        t \geq 0 & \text{if } v = u, \\
        t \leq 0 & \text{if } v = l.
    \end{cases}
\]
It follows that the possible solutions of the proximal operator are included in
\(
    \{0, x, l, u\} \cap S_x,
\)
where $S_x$ is the set of solutions of the equation: $-\psi_n'(v) + \kappa^\pm_n + \frac{1}{\rho}(v-x) = 0.
$

\subsection{Influence of the Parameter $\lambda_2$}\label{app:additional-exp}

In our experiments, we focused on pure sparsity-based problems (i.e., $\lambda_2 = 0$) in the least-squares setting. The results indicate that for $k^ = 25$, the BnB solver, within the 30-minute time limit, yields solutions with higher values of the functional $J_0$ compared to those obtained via the minimization of the proposed relaxations or the application of IHT to the original problem (see the left panel of Figure~\ref{fig:Ls-resutls}). In Figure~\ref{fig:enter-label}, we provide additional results for the same instances of $(\A, \y)$, but with $\lambda_2 = 2$. Here, we observe that both the BnB solver and FBS on the proposed relaxation achieve solutions with the lowest objective values. This is in agreement with the fact that it is well-known that considering $\lambda_2>0$ improves pruning tests for BnB methods. Moreover we observe that even in this setting the proposed methods compare favorably  with BnB, yet with a much lower computational cost.

\begin{figure}[!ht]
    \centering
    \includegraphics[width=0.9\linewidth]{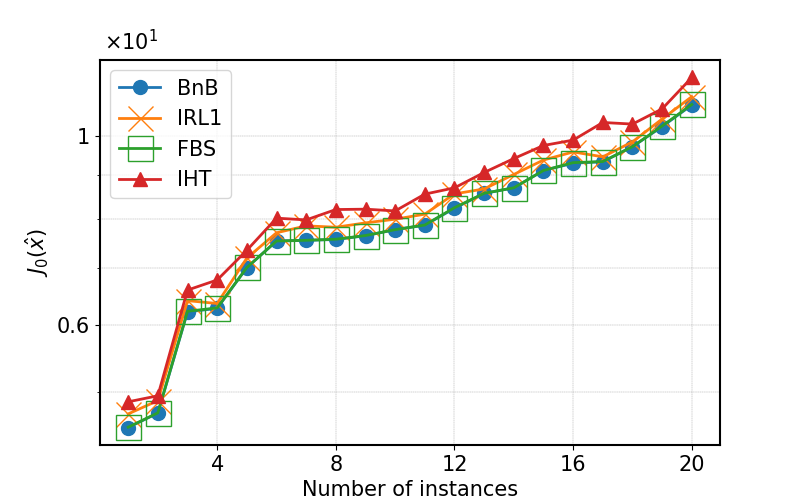}
    \caption{LS: (ordered) values of $J_0(\hat\x)$ obtained by each method along the $20$ problem instances, with $\lambda_0 = 5.5 \times 10^{-3} F_\y(\textbf{0})$, $\lambda_2 = 2$, $[l,u] = [-1.5, 1.5]$, $k^* = 25$, $\rho = 0.9$ and $\mathrm{SNR} = 10$.}
    \label{fig:enter-label}
\end{figure}

\bibliographystyle{IEEEtran}
\bibliography{refs}

\begin{thebibliography}{10}
\providecommand{\url}[1]{#1}
\csname url@samestyle\endcsname
\providecommand{\newblock}{\relax}
\providecommand{\bibinfo}[2]{#2}
\providecommand{\BIBentrySTDinterwordspacing}{\spaceskip=0pt\relax}
\providecommand{\BIBentryALTinterwordstretchfactor}{4}
\providecommand{\BIBentryALTinterwordspacing}{\spaceskip=\fontdimen2\font plus
\BIBentryALTinterwordstretchfactor\fontdimen3\font minus \fontdimen4\font\relax}
\providecommand{\BIBforeignlanguage}[2]{{%
\expandafter\ifx\csname l@#1\endcsname\relax
\typeout{** WARNING: IEEEtran.bst: No hyphenation pattern has been}%
\typeout{** loaded for the language `#1'. Using the pattern for}%
\typeout{** the default language instead.}%
\else
\language=\csname l@#1\endcsname
\fi
#2}}
\providecommand{\BIBdecl}{\relax}
\BIBdecl

\bibitem{cameron2013regression}
A.~C. Cameron and P.~K. Trivedi, \emph{Regression analysis of count data}.\hskip 1em plus 0.5em minus 0.4em\relax Cambridge university press, 2013, no.~53.

\bibitem{buhlmann2011statistics}
P.~B{\"u}hlmann and S.~Van De~Geer, \emph{Statistics for high-dimensional data: methods, theory and applications}.\hskip 1em plus 0.5em minus 0.4em\relax Springer Science \& Business Media, 2011.

\bibitem{Nguyen2019}
T.~T. Nguyen, C.~Soussen, J.~Idier, and E.-H. Djermoune, ``{NP-hardness of $\ell_0$ minimization problems: revision and extension to the non-negative setting},'' in \emph{Proceedings of SAMPTA}, Bordeaux, 2019.

\bibitem{Attouch2013}
H.~Attouch, J.~Bolte, and B.~F. Svaiter, ``Convergence of descent methods for semi-algebraic and tame problems: proximal algorithms, forward--backward splitting, and regularized {G}auss--{S}eidel methods,'' \emph{Mathematical Programming}, vol. 137, no.~1, pp. 91--129, 2013.

\bibitem{Beck-SPS}
A.~Beck and N.~Hallak, ``Proximal mapping for symmetric penalty and sparsity,'' \emph{SIAM Journal on Optimization}, vol.~28, no.~1, pp. 496--527, 2018.

\bibitem{Diego}
D.~D. Donne, M.~Kowalski, and L.~Liberti, ``A novel integer linear programming approach for global l0 minimization,'' \emph{Journal of Machine Learning Research}, vol.~24, no. 382, pp. 1--28, 2023.

\bibitem{guyard:24}
T.~Guyard, C.~Herzet, C.~Elvira, and A.-N. Arslan, ``A new branch-and-bound pruning framework for l0-regularized problems,'' in \emph{International Conference on Machine Learning (ICML)}.\hskip 1em plus 0.5em minus 0.4em\relax PMLR, 2024.

\bibitem{Weigeneral}
W.~Bian and X.~Chen, ``{A Smoothing Proximal Gradient Algorithm for Nonsmooth Convex Regression with Cardinality Penalty},'' \emph{SIAM Journal on Numerical Analysis}, vol.~58, no.~1, pp. 858--883, 2020.

\bibitem{carlsson2019convex}
M.~Carlsson, ``{On Convex Envelopes and Regularization of Non-convex Functionals Without Moving Global Minima},'' \emph{Journal of Optimization Theory and Applications}, vol. 183, no.~1, pp. 66--84, 2019.

\bibitem{soubies2015continuous}
E.~Soubies, L.~Blanc-F\'eraud, and G.~Aubert, ``{A Continuous Exact $\ell_0$ Penalty (CEL0) for Least Squares Regularized Problem},'' \emph{SIAM Journal on Imaging Sciences}, vol.~8, no.~3, pp. 1607--1639, 2015.

\bibitem{Exact2024}
M.~Essafri, L.~Calatroni, and E.~Soubies, ``{Exact Continuous Relaxations of $\ell_0$-Regularized Criteria with Non-quadratic Data Terms},'' \emph{arXiv:2402.06483}, 2024.

\bibitem{Essafri2024MLSP}
------, ``{On $\ell_0$ Bregman-Relaxations for Kullback-Leibler Sparse Regression},'' in \emph{2024 IEEE 34th International Workshop on Machine Learning for Signal Processing (MLSP)}, 2024, pp. 1--6.

\bibitem{Lazzaretti2021}
M.~Lazzaretti, L.~Calatroni, and C.~Estatico, ``{Weighted-{CEL}0 sparse regularisation for molecule localisation in super-resolution microscopy with Poisson data},'' in \emph{2021 IEEE 18th International Symposium on Biomedical Imaging (ISBI)}, 2021, pp. 1751--1754.

\bibitem{supp}
``Supplementary material,'' {\url{https://www.irit.fr/~Emmanuel.Soubies/Papiers/SuppMaterialBoxBrex.pdf}}.

\bibitem{CombettesWajs2005}
P.~L. Combettes and V.~R. Wajs, ``Signal recovery by proximal forward-backward splitting,'' \emph{Multiscale Modeling \& Simulation}, vol.~4, no.~4, pp. 1168--1200, 2005.

\bibitem{OchsIRL1}
P.~Ochs, A.~Dosovitskiy, T.~Brox, and T.~Pock, ``On iteratively reweighted algorithms for nonsmooth nonconvex optimization in computer vision,'' \emph{SIAM Journal on Imaging Sciences}, vol.~8, no.~1, pp. 331--372, 2015.

\end{thebibliography}

\end{document}